\numberwithin{equation}{subsection}
\theoremstyle{plain}
\newtheorem{lem}{Lemma}
\newtheorem{thm}[lem]{Theorem}
\newtheorem{cor}[lem]{Corollary}
\newtheorem*{lem*}{Lemma}
\newtheorem*{Acknowledgements*}{Acknowledgements}
\newtheorem*{prop*}{Proposition}
\newtheorem*{thm*}{Theorem}
\newtheorem*{cor*}{Corollary}
\newtheorem*{conj*}{Conjecture}
\theoremstyle{remark}
\newtheorem*{defn}{Definition}
\newcommand{\rad}{\operatorname{rad}}
\newcommand{\Id}{\mathrm{Id}}
\begin{document}

\title{Brauer algebras of Type $C$ are Cellularly Stratified Algebras}
\author{C.~Bowman}
\address{Corpus Christi College, Cambridge, CB2 1RH, England, UK}
\subjclass[2000]{20C30} 
\date{26th January 2011}

\begin{abstract} 
In a recent paper Cohen, Liu and Yu introduce the Type $C$ Brauer algebra.  We show that this algebra is an iterated inflation of hyperoctahedral groups, and that it is cellularly stratified.  This gives an indexing set of the standard modules, results on decomposition numbers, and the conditions under which the algebra is quasi-hereditary.
\end{abstract}

\maketitle
\section{Introduction}
In a recent paper Cohen, Liu and Yu introduce the Type $C$ Brauer algebra. It is defined as the fixed points of a diagram automorphism of the Type $A$ Brauer algebra.  They show that this algebra is cellular.  

K\"onig and Xi introduced in \cite{cellular} the concept of an iterated inflation and showed this to be equivalent to being cellular.  This construction allows one to study a larger cellular algebra in terms of the layers of the inflation.  This framework allows us to construct the standard modules.

In \cite{damn} Hartmann, Henke, K\"onig and Paget define a cellularly stratified algebra, as an iterated inflation with certain conditions.  The extra structure allows for the comparison of certain decomposition numbers with those of the input algebras; 
it also gives us natural definitions of Young modules and associated Schur algebras.

We shall follow the model laid out in the papers of K\"onig and Xi to show that the Brauer algebra of type $C$ is an iterated inflation of hyperoctahedral groups.  On the way we shall define the idempotents necessary to prove that the algebra satisfies the extra conditions to be cellularly stratified.  We then discuss the immediate results on standard modules and decomposition numbers, as well as deriving the conditions under which the algebra is quasi-hereditary.

\section{Definitions and Examples}
The Dynkin diagram of type $C_n$ arises from that of type $A_{2n+1}$ as the fixed points of a diagram automorphism.  This automorphism, $\sigma$, can be described geometrically as reflection in the vertical axis of the diagram.  Abusing notation we shall also let $\sigma$ denote the induced map on the Coxeter groups, Brauer algebras, and planar diagrams.

\subsection{The Type $C$ Brauer algebra}

We give the definition of the Type $C$ Brauer algebra and a diagrammatic visualisation.  Let $R$ be a commutative ring with invertible element $\delta$.  For $n \in \mathbb{N}$, the Brauer algebra of Type $C_n$ over $R$ with loop parameter $\delta$, denoted by $B(C_n, R, \delta)$ is the $R$-algebra generated by $r_1, \ldots , r_{n}$ and $e_1, \ldots, e_{n}$ subject to the following relations:
\begin{align*}
r_2r_1r_2r_1 &=r_1r_2r_1r_2 , \\
r_i^2 &=1	 , \\
r_i r_{i+1} r_i &= r_{i+1} r_i r_{i+1} \text{ for } i\neq 1 	 , \\
r_ir_{i+1} &= r_{i+1} r_i   , \\
e_i^2 &= \delta^2 e_i \text{ for } i \neq 1 , \\
e_1^2 &= \delta e_1	, \\
e_i e_{i+1} &= e_{i+1} e_i  , \\
r_ie_i&=e_ir_i = e_i	, \\
e_ir_{i\pm 1}&= r_{i\pm 1}e_i \text{ for }i > 2, \\
r_{i\pm 1}r_ie_{i\pm 1}&=e_ie_{i\pm 1}\text{ for } i > 2, \\
r_ie_jr_i &= r_je_ir_j \text{ for } i,j > 1	, \\
r_2r_1e_2&=r_1e_2 , \\
r_2e_1r_2e_1 &=e_1e_2e_1	, \\
(r_2r_1r_2)e_1 &= e_1(r_2r_1r_2)	, \\
e_2r_1e_2 &= \delta e_2	, \\
e_2 e_1 e_2 &=\delta e_2 , \\
e_2r_1r_2 &= e_2r_1 , \\
e_2e_1r_2 &=e_2e_1.
\end{align*}
where the elements can be seen diagrammatically as follows:
\begin{align*}
r_1 = \begin{minipage}{54mm}
\def\objectstyle{\scriptstyle}
\xymatrix@=2pt{
		&&			&&			&\ar@{--}[dddd]\\
\circ  \ar@{-}[dd]		&\cdots&\circ  \ar@{-}[dd]		&&\circ\ar@{-}[ddrr] 		&&\circ 	\ar@{-}[ddll]	&&\circ 	 \ar@{-}[dd]	&\cdots&\circ  \ar@{-}[dd]	\\	
&&&&\\	
\circ 		&\cdots&\circ 		&&\circ 		&&\circ 		&&\circ 		&\cdots&\circ 	\\
		&&			&&			&
}
\end{minipage}
\ \ \ \ \ \ r_i = \begin{minipage}{54mm}
\def\objectstyle{\scriptstyle}
\xymatrix@=2pt{
		&&			&&			&&&\ar@{--}[dddd]\\
\circ  \ar@{-}[dd]		&\cdots&\circ\ar@{-}[ddrr]   	&&\circ\ar@{-}[ddll] 	& \cdots	&\circ\ar@{-}[dd]	&&\circ  \ar@{-}[dd]		&\cdots&\circ\ar@{-}[ddrr]   	&&\circ\ar@{-}[ddll] 	& \cdots	&\circ\ar@{-}[dd]\\
&&&&\\	
\circ 		&\cdots&\circ 		&&\circ 	&\cdots	&\circ 		&&\circ 		&\cdots&\circ 		&&\circ 	&\cdots	&\circ \\
		&&			&&	&&		&
}
\end{minipage}\\
e_1 = \begin{minipage}{54mm}
\def\objectstyle{\scriptstyle}
\xymatrix@=2pt{
		&&			&&			&\ar@{--}[dddd]\\
\circ  \ar@{-}[dd]		&\cdots&\circ  \ar@{-}[dd]		&&\circ\ar@{-}[rr] 		&&\circ 	 	&&\circ 	 \ar@{-}[dd]	&\cdots&\circ  \ar@{-}[dd]	\\	
&&&&\\	
\circ 		&\cdots&\circ 		&&\circ 	\ar@{-}[rr]	&&\circ 		&&\circ 		&\cdots&\circ 	\\
		&&			&&			&
}
\end{minipage}
\ \ \ \ \ \ e_i = \begin{minipage}{54mm}
\def\objectstyle{\scriptstyle}
\xymatrix@=2pt{
		&&			&&			&&&\ar@{--}[dddd]\\
\circ  \ar@{-}[dd]		&\cdots&\circ  	&&\circ\ar@{-}[ll] 	& \cdots	&\circ\ar@{-}[dd]	&&\circ  \ar@{-}[dd]		&\cdots&\circ\ar@{-}[rr]   	&&\circ 	& \cdots	&\circ\ar@{-}[dd]\\
&&&&\\	
\circ 		&\cdots&\circ 		&&\circ \ar@{-}[ll] 	&\cdots	&\circ 		&&\circ 		&\cdots&\circ 		&&\circ\ar@{-}[ll] 	&\cdots	&\circ \\
		&&			&&	&&		&
}
\end{minipage}
\end{align*}
for $i\geq2$.  This algebra is shown in \cite{cohen} to be cellular with respect to the involution given by reflection in the horizontal axis.

\subsection{Hyperoctahedral Groups}
The $n$-dimensional hyperoctahedral group $H_n$, is the Weyl group of type $C_n$, and as a wreath product is $\Sigma_2\wr\Sigma_n$ (where $\Sigma_n$ is the symmetric group on $n$ letters).  We shall view this group as a subgroup of $\Sigma_{2n}$ with generators
\begin{align*}
r_1&= (1, n+1)(2, n+2) \ldots (n, 2n),		\\
r_i&= ( i-1, i) \text{ for } 2\leq i \leq n.
\end{align*}
The Weyl group of type $C_n$ is well-known to be the fixed points of the Weyl group of type $A_{2n+1}$ under the graph automorphism $\sigma$.  If we picture the group as the symmetric planar diagrams with no horizontal arcs, then it is cellular with respect to the involution given by reflection in the horizontal axis (or equivalently with respect to the inverse map).

\section{Cellularly Stratified Algebras}

\subsection{Cellular Algebras}
The original definition of a cellular algebra, given by Graham and Lehrer in \cite{GL} has been shown to be equivalent to the following definition due to K\"onig and Xi.
\begin{defn}
Let $A$ be a $k$-algebra.  Assume there is an anti-automomorphism $i$ on $A$ with $i^2= \text{id}$.  A two-sided ideal $J$ in $A$ is called a \emph{cell ideal} if and only if $i(J)=J$ and there exists a left ideal $\Delta \subset J$ such that $\Delta$ has finite $k$-dimension and that there is an isomorphism of $A$-bimodules $\alpha:J \cong \Delta \otimes_k i(\Delta)$ making the following diagram commutative:
\[\xymatrix@=6pt{
&J \ar@{>}[rrrr]^{\alpha}\ar@{>}[dddd]^{i}	&&&&\Delta \otimes_k i(\Delta)\ar@{>}[dddd]^{x \otimes y \mapsto i(y) \otimes i(x)}\\
&\\
&\\
&\\
&J \ar@{>}[rrrr]^{\alpha}			&&&&\Delta \otimes_k i(\Delta)	
}\]
The algebra $A$ is called \emph{cellular} if and only if there is a vector space decomposition $A=J'_1 \oplus \ldots \oplus J'_n$ with $i(J'_j)=J'_j$ for each $j$ and such that setting $J_j=\oplus_{k=1}^jJ'_l$ gives a chain of two-sided ideals of $A$ and for each $j$ the quotient $J'_j=J_j/J_{j-1}$ is a cell ideal of $A/J_{j-1}$.
\end{defn}

\subsection{Iterated Inflations}

It is proven in \cite{cellular} that any cellular algebra can be exhibited as an iterated inflation of copies of the ground field $k$.  Conversely, any iterated inflation of cellular algebras is again cellular.  We shall review the definition of an inflation, but shall avoid the technical details of the definition of an iterated inflation, and instead refer to \cite{cellular}.  

Given a $k$-algebra $B$, a $k$-vector space $V$, and a bilinear form $\varphi:V \times V \to B$ we define an associative algebra (possibly without unit) as follows: as a $k$-module $A= V \otimes V \otimes B$, the multiplication is defined on basis elements:
\begin{align*}
(a \otimes b \otimes x)(c \otimes d \otimes y) = (a \otimes d \otimes x \varphi(b,c) y).
\end{align*}
We are interested in the case where $B$ is a cellular algebra, and therefore comes equipped with an anti-automorphism $\sigma$ on $B$.  We then have that the form $\varphi$ respects the anti-automorphism, so that $\sigma\varphi(v,w)=\varphi(w,v)$.  We can then define an anti-automorphism, $i$, on $A$ by letting $i(a \otimes b \otimes x)=b \otimes a \otimes \sigma(x)$.

This definition makes $A$ an associative algebra with an anti-automorphism.  We say that $A$ is an \emph{inflation} of $B$ along $V$.  $A$ will not usually have a unit element, but may have idempotents.

Let $B$ be an inflated algebra (possibly without unit) and $C$ be an algebra (with a unit).  We define an algebra structure on the vector space $A=B\oplus C$ which extends the two given structures in a way so that $B$ is a two-sided ideal and $A/B=C$.  We require that $B$ is an ideal, the multiplication is associative, and that there exists a unit element of $A$ which maps onto the unit of the quotient $C$.  The necessary conditions are outlined in \cite{cellular}.

An inductive application of this procedure to algebras $C, B_1, B_2 \ldots $ ensures that the inflation pieces, $B_i=V_i \otimes V_i\otimes B_i'$, are subquotients of ideals in the algebra $A$.

Let $A$ be a cellular algebra with anti-involution $i$, and inflation decomposition:
\begin{align*}
A = \bigoplus^m_{j=1} V_j \otimes V_j \otimes B_j,
\end{align*}
where $V_j$ is a vector space and $B_j$ is a cellular algebra with respect to an anti-involution $\sigma_j$, such that the restriction of $i$ to $V_j \otimes V_j \otimes B_j$ is given by $w \otimes v \otimes b \mapsto v \otimes w \otimes \sigma_j(b)$. 

\subsection{Cellularly Stratified Algebras}

The following definition is lifted from \cite{damn}.

\begin{defn}
A finite dimensional associative algebra $A$ is called \emph{cellularly stratified} with stratification data $(B_1, V_1, \ldots , B_n,V_n)$ if and only if the following conditions are satisfied:
\begin{enumerate}
\item The algebra $A$ is an iterated inflation of cellular algebras $B_l$ along vector spaces $V_l$ for $l=1, \ldots, n$.
\item For each $l=1, \ldots, n$ there exist non-zero elements $u_l, v_l \in V_l$ such that
\begin{align*}
e_l = u_l \otimes v_l \otimes 1_{B_l},
\end{align*}
is an idempotent.
\item If $l > m$, then $e_le_m=e_m = e_m e_l$.
\end{enumerate}
\end{defn}

In \cite{damn} it is shown that the partition, BMW, and Brauer algebras are cellularly stratified. 

\section{The Brauer algebra of type $C$ as an iterated inflation}
In this section we prove that the Brauer algebra, $B(C_n,R, \delta)$ is an iterated inflation of the hyperoctahedral groups, providing $\delta \neq 0$.
\begin{defn}
An $(n,k)$-dangle is a partition of $\{1 , \ldots, n\}$ into $n-2k$ one-element subsets and $k$ two-element subsets, here $k \leq n/2$.
\end{defn}
We have that the graph automorphism, $\sigma$, naturally acts on the set of all $(n,k)$-dangles by permuting the nodes.  This can be seen as a reflection in the vertical axis of the diagram.
\begin{defn}
A symmetric $(n,k)$-dangle is an $(n,k)$-dangle which is invariant under the graph automorphism $\sigma$.
\end{defn}

We can geometrically represent a symmetric dangle, $d$, by a set of $n$ nodes labelled by the set $\{1, \ldots, n\}$, where there is an arc joining $i$ to $j$ if $\{i,j\} \in d$, and there is a vertical line starting from $i$ if $\{i\} \in d$.  For example

\begin{align*}
\begin{minipage}{54mm}
\def\objectstyle{\scriptstyle}
\xymatrix@=2pt{
\circ  \ar@{-}[dd]		&&\circ  \ar@{-}[dd]		&&\circ\ar@{-}@/_.5pc/[rrrr]	&&\circ 	\ar@{-}@/_.5pc/[rrrr]		&&\circ 		&&\circ	&&\circ \ar@{-}[dd] &&\circ \ar@{-}[dd]   \\
&&&&&&&&&&&\\	
&&&&&&&&&&&&&&\\
}
\end{minipage},
\end{align*}
is a symmetric $(8,2)$-dangle.  We let $V(n,k)$ denote the set of all symmetric $(n,k)$-dangles.  Dually, we let $V'(n,k)$ to be a copy of $V(n,k)$, but draw the vertical lines upwards, rather than downwards.  We shall often denote both spaces as $V(n,k)$ if 
we do not want to distinguish between the two spaces.

We now define the idempotents 
we shall need, let $f_k$ be the $\delta^{-k}$ times the diagram with $k$ arcs, each joining the nodes $n/2-i$ to $n/2+i$ for $i\leq k$, as below:
\begin{align*}
f_k=\frac{1}{\delta^k}\begin{minipage}{54mm}
\def\objectstyle{\scriptstyle}
\xymatrix@=2pt{
&&										&&&&&\ar@{--}[dddd]\\
\ar@{-}[dd]	\circ & \cdots & 	\circ &&\circ& &\circ 				& & 	\circ	\ar@{-}[ll]			&& \circ \ar@{-}@/^.3pc/[llllll]   	&& \circ\ar@{-}@/^.5pc/[llllllllll] &\cdots& \circ  \ar@{-}[dd]\\
&&&&&&&&&&&\\	
\circ&\cdots & 	 \circ	\ar@{-}@/^.5pc/[rrrrrrrrrr]  			&&  \circ \ar@{-}@/^.3pc/[rrrrrr]& & \circ  \ar@{-}[rr]		&&\circ 		&& \circ			&& \circ &\cdots&\circ 	&& 	\\
&&						&&&&				&&			&}
\end{minipage}  \end{align*}
Note that there idempotents are well-defined as we have assumed that $\delta \neq 0$.  We define a chain of two-sided ideals of $B=J_0 > J_1 > J_2 > \ldots $, where $J_i=Bf_iB$. By definition, $J_i = Bf_iB$ is the two-sided cell-ideal where all diagrams have at least $i$ horizontal arcs.

\begin{lem}\label{result}
Fix an index $k$ and let $B$ be the $k$-algebra $J_k/J_{k+1}$.  Then $B$ is isomorphic as a $k$-algebra to the inflation $V(n,k) \otimes V'(n,k) \otimes kH_{n-2k}$ of the hyperoctahedral group $H_{n-2k}$ along the free $k$-module $V(n,k)$.  In the proof we shall define the bilinear form.
\end{lem}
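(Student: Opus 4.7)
\emph{Proof plan.} The strategy is to construct an explicit $R$-linear isomorphism $\alpha\colon J_k/J_{k+1} \to V(n,k) \otimes V'(n,k) \otimes kH_{n-2k}$ by cutting each $\sigma$-invariant Brauer diagram into its top dangle, bottom dangle, and middle matching, and then to extract the bilinear form that recovers Brauer multiplication.

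First I would fix an $R$-basis of $J_k/J_{k+1}$ consisting of (classes of) $\sigma$-invariant Brauer diagrams whose top and bottom rows each contain exactly $k$ horizontal arcs; these are precisely the diagrams in $J_k$ that are not in $J_{k+1}$. Any such diagram $d$ decomposes uniquely into three pieces: its top half is a symmetric $(n,k)$-dangle $v \in V(n,k)$, its bottom half (with lines drawn upwards) is $w \in V'(n,k)$, and its $n-2k$ vertical through-strands give a bijection $\pi$ between the free nodes on top and the free nodes on the bottom. Because $d$ is $\sigma$-invariant, $\pi$ is $\sigma$-equivariant with respect to the induced $\sigma$-pairing on the free nodes, and hence corresponds to a unique element of $H_{n-2k}$ under the embedding of Section 2. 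Setting $\alpha(d) = v \otimes w \otimes \pi$ and extending $R$-linearly gives the required module isomorphism, and the inverse reconstructs a diagram from a triple.

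Next I would define the bilinear form $\varphi\colon V'(n,k) \times V(n,k) \to kH_{n-2k}$ diagrammatically. Given $b \in V'(n,k)$ drawn with lines upwards and $c \in V(n,k)$ drawn with lines downwards, stack $b$ beneath $c$ so their free ends meet in the middle. If tracing the free lines through the arcs of $b$ and $c$ ever creates a new horizontal arc at the exterior, set $\varphi(b,c) = 0$; otherwise the stacking yields $n-2k$ through-strands realising a $\sigma$-equivariant matching $\rho \in H_{n-2k}$ together with some number $\ell$ of closed loops, and we set $\varphi(b,c) = \delta^\ell \rho$. A direct comparison of the Brauer stacking rule with the inflation multiplication $(v_1 \otimes w_1 \otimes \pi_1)(v_2 \otimes w_2 \otimes \pi_2) = v_1 \otimes w_2 \otimes \pi_1 \varphi(w_1,v_2)\pi_2$ shows that $\alpha$ is an algebra map: composition of $d_1$ and $d_2$ glues $w_1$ to $v_2$ in the middle, and the three possible outcomes (new arc, closed loops, permuted strands) are exactly recorded by $\varphi(w_1,v_2)$.

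It remains to check that $\varphi$ respects the anti-involutions in the sense $\sigma_j(\varphi(b,c)) = \varphi(c,b)$, where $\sigma_j$ is the anti-involution of $kH_{n-2k}$ given by inversion of group elements; this follows by flipping the stacked picture upside down, which swaps $b$ and $c$, fixes the loop count, and inverts $\rho$. The main obstacle is not algebraic but combinatorial bookkeeping: one must verify that the set of $\sigma$-equivariant bijections between the free nodes of $v$ and $w$ matches $H_{n-2k}$ under the specific embedding $H_{n-2k} \hookrightarrow \Sigma_{2(n-2k)}$ recorded in Section 2 (the $\sigma$-pair structure on the free nodes has to be identified with the pairing $\{i, (n-2k)+i\}$ used to define $H_{n-2k}$), and one must confirm by an arc-tracing argument that a product of two basis diagrams lands in $J_{k+1}$ \emph{exactly} when the middle stacking creates a new arc — that is, precisely in the $\varphi = 0$ case. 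Once these compatibilities are settled, $J_k/J_{k+1}$ is exhibited as the required inflation of $kH_{n-2k}$ along $V(n,k)$.
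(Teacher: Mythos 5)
Your proposal is correct and follows essentially the same route as the paper: cut each $\sigma$-invariant diagram into a top dangle, a bottom dangle and a through-strand permutation lying in $H_{n-2k}$, and define $\varphi$ by stacking, with value $\delta^{\ell}\rho$ when no new arc is created and $0$ otherwise. The only differences are cosmetic — you fold the involution-compatibility check into this proof, whereas the paper records it as a separate lemma.
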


\begin{proof}
Suppose $d \in B(C_n, R, \delta)$ with $k$ vertical lines.  Then we get an element $e \in V(n,k)$ and an element of $f \in V'(n,k)$ by cutting all the vertical lines.  We also get a permutation $\pi \in \Sigma_{n-2k}$ in the following way: enumerate the top ends of the vertical lines in $d$ from left to right with the labels $\{1, \ldots, n-2k\}$, and the bottom row of nodes with the primed labels.  If $i$ and $j'$ are joined by a vertical line in $d$, then define $\pi(d)(i)=j$.  This determines an element $\pi(d)\in \Sigma_{n-2k}$.  As $d$ is fixed under the graph automorphism on the Brauer algebra, we get that $\pi_d$ is fixed under the graph automorphism $\sigma$ acting on the Weyl group; and therefore $\pi(d) \in H_{n-2k}$.  These datum $(d_1,d_2,\pi(d))\in V(n,k) \otimes V'(n,k) \otimes H_{n-2k}$ clearly determine $d$ uniquely.
We therefore define an isomorphism of $k$-modules:
\begin{align*}
\psi  	&: B \to V(n,k) \otimes V'(n,k) \otimes kH_{n-2k},	\\
	&: d \mapsto e \otimes f \otimes \pi_d.
\end{align*}
In order to define the multiplicative structure we need to define a bilinear form $\varphi : V(n,k) \otimes V'(n,k) \to H_{n-2k}$.  Let $x=e \otimes f \otimes \pi_{d_1}$, $y=g \otimes h \otimes \pi_{d_2}$ be two bases elements of $V(n,k) \otimes V'(n,k) \otimes kH_{n-2k}$.  The product $xy$ is of the form $e \otimes h \otimes \pi_{d_1} \varphi(f,g)\pi_{d_2}$.  We have two cases to consider.

\textbf{First Case:} Each element in $f$ has at least one common vertex with an element of $g$, or equivalently the number of arcs in $xy$ is equal to the number of arcs in $x$ or $y$, (whichever is the larger).  In this case we define $\varphi(f,g)=\delta^m \sigma$, which are defined as follows: the group element $\sigma$ is the permutation defined by the through strings of the product $\psi^{-1}(f \otimes f \otimes \Id) \cdot \psi^{-1}(g \otimes g \otimes \Id)$, and the integer $k$ is defined to be the number of closed loops in this product.

\textbf{Second Case:} There is at least one edge in $f$, which shares no vertex in $g$, or equivalently the number of arcs in $xy$ is strictly greater than the number of arcs in either $x$ or $y$.  In this case we define $\varphi(f,g)=0$.

We have defined the bilinear form so that it coincides exactly with the multiplication in a given layer of the iterated inflation, and so we are done.
\end{proof}
Now we must check that the layers fit together properly; if $d_1$, $d_2$ are from different layers (or equivalently have a different number of horizontal arcs) we want to discuss the product $d_1d_2$.
\begin{lem}
Let $d_1 \in J_l/J_{l+1}$ and $d_2 \in J_k/J_{k+1}$ be two diagrams in $A$, which map to $e \otimes f \otimes \pi_{d_1}$, and $g \otimes h \otimes \pi_{d_2}$ under the bilinear forms defined for their respective layers.  We assume $k\geq l$.  Then the product $d_1d_2$ is either an element of $J_k/J_{k-1}$, or is an element of $J_{k-1}$.  In the former case it corresponds under $\psi$ to a scalar multiple of an element $v \otimes g \otimes \sigma d_2$ where $v \in V(n,k)$ and $\sigma \in H_{n-2k}$.
\end{lem}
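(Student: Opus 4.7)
The plan is to analyse the concatenation $d_1 d_2$ diagrammatically. First I would observe that the $k$ horizontal arcs lying in the half of $d_2$ not glued to $d_1$ persist unchanged in the corresponding half of $d_1 d_2$, so the product has at least $k$ horizontal arcs and therefore lies in $J_k$. If any additional arcs are produced during the middle-merge then $d_1 d_2$ has strictly more than $k$ arcs, placing it in $J_{k+1}$; this disposes of the second of the two alternatives.

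For the remaining case I would assume that no additional arcs are produced. Under this hypothesis every through-string of $d_2$ must continue through the middle into a through-string of $d_1$, since otherwise it would meet an arc in the opposite half of $d_1$ and manufacture a new horizontal arc, contradicting our assumption. Consequently the half of $d_1 d_2$ coming from $d_2$ is exactly $g$ (in the notation of the lemma), while the other half is a new symmetric dangle $v \in V(n,k)$: it consists of the $l$ original arcs of $d_1$ together with the $k-l$ further arcs produced whenever a pair of $d_1$'s through-strings meets an arc of $d_2$ at the middle row. Each closed loop created along the way contributes a factor of $\delta$, which accounts for the scalar in the statement.

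To identify the permutation part I would chase through-strings. Labelling the through-strings of $d_2$ from left to right, they are transported through $d_2$ by $\pi_{d_2}$ and arrive at the middle row. The through-strings of $d_1$ that are not absorbed into the new arcs induce a permutation $\sigma \in \Sigma_{n-2k}$ of these end-points, and by composition the through-strings of $d_1 d_2$ realise $\pi_{d_1 d_2} = \sigma\,\pi_{d_2}$. This is precisely the claimed form $v \otimes g \otimes \sigma \pi_{d_2}$.

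The main obstacle will be the bookkeeping when $d_1$ has strictly more through-strings than $d_2$: one must check that the surplus through-strings of $d_1$ pair off correctly with arcs of $d_2$ at the middle row, so that precisely $n-2k$ through-strings survive, and then that the resulting permutation is well-defined and fixed by the graph automorphism. This last point is forced by the fact that $d_1$ and $d_2$ are individually fixed by the graph automorphism, so $d_1 d_2$ is as well; restricting this symmetry to the surviving through-strings places $\sigma$ inside the hyperoctahedral subgroup $H_{n-2k}$ required by Lemma~\ref{result}.
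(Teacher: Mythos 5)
Your argument is, in substance, the proof the paper intends but does not write down: the paper only remarks that the proof is ``very similar to that of Lemma \ref{result}'', meaning exactly the diagram-concatenation analysis you carry out, so the approach matches. Two points are worth flagging. First, you have silently (and correctly) repaired the statement: since horizontal arcs can only be created, never destroyed, under concatenation, the meaningful dichotomy is $J_k/J_{k+1}$ versus $J_{k+1}$, not the $J_{k-1}$ printed in the lemma, and it is this corrected version that your proof establishes. Second, your opening dichotomy --- ``if any additional arcs are produced during the middle-merge then $d_1d_2$ has strictly more than $k$ arcs'' --- is not literally correct when $k>l$: precisely $k-l$ additional arcs \emph{must} be produced in the $d_1$-half for the product to remain in layer $k$, as your own description of $v$ (the $l$ old arcs of $d_1$ plus $k-l$ new ones) makes plain. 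The correct criterion, which your second paragraph in effect uses, is whether some pair of through-strings of $d_2$ becomes joined when passed through $d_1$, i.e.\ whether a new arc appears in the $d_2$-half of the product: if so, the top and bottom arc counts both exceed $k$ and the product falls into $J_{k+1}$; if not, the product has exactly $k$ arcs and your analysis of $v$, of the scalar coming from closed loops, and of the factorisation $\pi_{d_1d_2}=\sigma\pi_{d_2}$ goes through. The closing observation --- that $d_1d_2$ is fixed by the graph automorphism because $d_1$ and $d_2$ are, so that $v$ is a symmetric dangle and $\sigma$ lies in $H_{n-2k}$ rather than merely in $\Sigma_{n-2k}$ --- is the one step genuinely specific to type $C$, and you have identified it correctly.
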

There is a similar statement for $k \leq l$, and the proof of both statements is very similar to that of Lemma \ref{result}.  

\begin{lem}
The involution on $B(C_n,R,\delta)$ corresponds to the standard involution on $V(n,k)\otimes V(n,k) \otimes B_{n-2k}$ given by $:e \otimes f \otimes \pi \mapsto f \otimes e \otimes \pi^{-1}$.
\end{lem}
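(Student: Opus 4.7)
The plan is to verify the formula by evaluating both sides on a diagram basis of $B(C_n, R, \delta)$ and chasing the definition of $\psi$ from Lemma \ref{result}. Fix a diagram $d \in J_k/J_{k+1}$, so that $\psi(d) = e \otimes f \otimes \pi_d$, where $e \in V(n,k)$ records the top dangle, $f \in V'(n,k)$ the bottom dangle, and $\pi_d \in H_{n-2k}$ is the through-string permutation obtained after labelling the top and bottom endpoints $1, \ldots, n-2k$ from left to right.

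The involution on $B(C_n, R, \delta)$ is reflection in the horizontal axis, and I would trace its effect on the three pieces of data independently. The top and bottom rows of $d$ are swapped, so the top dangle of $i(d)$ is the reflection of $f$, which is precisely $f$ read as an element of $V(n,k)$; similarly the bottom dangle of $i(d)$ is $e$ read as an element of $V'(n,k)$. For the through-strings: a string connecting top node $a$ to bottom node $b'$ in $d$ becomes one connecting top node $b$ to bottom node $a'$ in $i(d)$. Since horizontal positions are unchanged by the reflection, the left-to-right labelling on each row is preserved, so if $\pi_d(i) = j$ then $\pi_{i(d)}(j) = i$, giving $\pi_{i(d)} = \pi_d^{-1}$. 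Combining these observations yields $\psi(i(d)) = f \otimes e \otimes \pi_d^{-1}$, exactly as claimed.

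Finally, one verifies that this agrees with the standard inflation anti-involution $w \otimes v \otimes b \mapsto v \otimes w \otimes \sigma(b)$, where $\sigma$ is the anti-involution on the layer algebra: here $B_{n-2k} = kH_{n-2k}$, cellular with respect to the inverse map as recorded in the subsection on hyperoctahedral groups, so the sought formula $e \otimes f \otimes \pi \mapsto f \otimes e \otimes \pi^{-1}$ is precisely that standard inflation involution. There is no substantial obstacle; the verification is bookkeeping. The only point requiring attention is the convention that $V'(n,k)$ is a copy of $V(n,k)$ with vertical lines drawn upwards, so that horizontal reflection literally implements the identification $V(n,k) \leftrightarrow V'(n,k)$ used throughout Lemma \ref{result}.
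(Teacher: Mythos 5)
Your proposal is correct and takes essentially the same route as the paper, whose entire proof is the one-line remark that the claim follows from the definition of the involution as reflection in the horizontal axis; you have simply written out the routine bookkeeping (swapping the dangles and inverting the through-string permutation) that the paper leaves implicit. No gaps.
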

\begin{proof}
This follows easily from the definition of the involution as the reflection in the horizontal axis.
\end{proof}

\begin{thm}
Assume that $\delta \neq0$.  The Brauer algebra $B(C_n,R, \delta)$ is a cellularly stratified diagram algebra.
\end{thm}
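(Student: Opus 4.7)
The plan is to verify the three conditions of the definition of a cellularly stratified algebra. Condition (1), the iterated inflation structure along the cellular algebras $kH_{n-2k}$, has essentially been established by Lemma \ref{result} together with the two subsequent lemmas about compatibility of layers and of the anti-involution. What remains is to check conditions (2) and (3): that the elements $f_k$ already defined in the paper are idempotents of the required form, and that they satisfy the appropriate absorption relations.

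For condition (2), I would first verify by a direct diagrammatic computation that $f_k$ is idempotent. Stacking the diagram underlying $f_k$ with itself pairs each of its $k$ symmetric arcs against its counterpart, producing $k$ closed loops. Each such loop crosses the vertical axis of symmetry and is $\sigma$-invariant, so by the relation $e_1^2 = \delta e_1$ each contributes a factor of $\delta$ rather than $\delta^2$; the normalising scalar $\delta^{-k}$ in the definition of $f_k$ is chosen precisely to cancel this. Unpacking $\psi(f_k)$ gives $\delta^{-k} u_k^{\text{top}} \otimes u_k^{\text{bot}} \otimes 1_{H_{n-2k}}$, so absorbing the scalar $\delta^{-k}$ into one of the tensor factors (using invertibility of $\delta$) yields the required factorisation $e_l = u_l \otimes v_l \otimes 1_{B_l}$.

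For condition (3), I would reindex the layers so that smaller index corresponds to deeper in the filtration $J_0 \supset J_1 \supset \cdots$, by setting $e_l = f_{N-l+1}$ where $N = \lfloor n/2 \rfloor$. The key calculation is $f_k f_l = f_{\max(k,l)}$: for $k > l$, stacking the diagram of $f_k$ on that of $f_l$ produces $l$ closed $\sigma$-invariant loops where the two innermost systems of arcs meet, while the outer $k-l$ bottom arcs of the upper factor thread through the vertical lines of the lower factor to reappear as arcs in the bottom row of the composite, reconstructing the diagram of $f_k$ decorated with a loop factor $\delta^l$. The normalisations cancel to give $f_k f_l = f_k$, and the opposite product $f_l f_k = f_k$ then follows because the anti-involution fixes each $f_k$. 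Translating to the $e_l$ indexing yields precisely $e_l e_m = e_m = e_m e_l$ for $l > m$.

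The hard part, or at any rate the step requiring most care, is the loop counting in the type $C$ setting. Unlike the type $A$ Brauer algebra, where every closed loop contributes $\delta$, here one must distinguish $\sigma$-invariant loops (parameter $\delta$, coming from the $e_1$ relation) from pairs of loops swapped by $\sigma$ (parameter $\delta^2$, from the $e_i$ relation for $i \geq 2$). Tracking this distinction carefully, together with the choice of normalising constants $\delta^{-k}$, is what makes both the idempotent identity and the absorption identity hold on the nose rather than up to unwanted powers of $\delta$; once that bookkeeping is done, the three conditions drop out.
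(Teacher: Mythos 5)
Your proof is correct and takes the same route as the paper: both reduce the theorem to checking the idempotent conditions (2) and (3) for the elements $f_k$, given the iterated inflation structure from the preceding lemmas. The paper simply asserts that these conditions ``clearly'' hold, whereas you supply the actual verification --- the distinction between $\sigma$-invariant loops contributing $\delta$ and mirror pairs contributing $\delta^2$, the resulting identities $f_k^2=f_k$ and $f_kf_l=f_{\max(k,l)}$, and the reindexing of the layers --- all of which is accurate and is exactly the bookkeeping the paper leaves implicit.
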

\begin{proof}
By the above we have that the algebra is an iterated inflation of hyperoctahedral groups.  Therefore we need only check that the conditions on the idempotents are satisfied.  But clearly the elements $f_k$ that we have defined satisfy the required conditions.
\end{proof}

The following corollaries are immediate from the theory of cellularly stratified algebras, and can all be found in this general setting in \cite{damn}.  

\begin{cor}
We have that the cell-modules of  $B(C_n,R, \delta)$ are 
\begin{align*}
\{ \Delta (\mu,k) = V(n,k) \otimes v_k \otimes \Delta(\mu) : k \leq n/2,  \Delta(\mu) \text{ is a standard module of } H_{n-2k}\}
\end{align*}
for $v_k$ is an arbitrary non-zero element of $V(n,k)$.
\end{cor}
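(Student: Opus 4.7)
The plan is to deduce the corollary by applying the general machinery describing cell modules of iterated inflations, as developed in \cite{cellular} and used in \cite{damn} for cellularly stratified algebras.

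The first step is to recall the cellular structure just established: by Lemma \ref{result} and the preceding theorem, $B(C_n, R, \delta)$ is an iterated inflation with layers
\[
J_k/J_{k+1} \;\cong\; V(n,k) \otimes V'(n,k) \otimes kH_{n-2k},
\]
and the algebras $kH_{n-2k}$ are themselves cellular, with cell modules $\Delta(\mu)$ arising from the standard Specht-type construction for the hyperoctahedral group $H_{n-2k} = \Sigma_2 \wr \Sigma_{n-2k}$ (indexed by bipartitions of $n-2k$).

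Next, I would invoke the general principle from \cite{cellular}: given an inflation $V \otimes V \otimes B$ of a cellular algebra $B$ along a vector space $V$, each cell module $\Delta(\mu)$ of $B$ gives rise to a cell module of the inflation, realised concretely as $V \otimes v \otimes \Delta(\mu)$ for any fixed non-zero $v \in V$. The verification is by the multiplication rule in an inflation: left multiplication by $a \otimes b \otimes y$ on a simple tensor $c \otimes v \otimes m$ yields $a \otimes v \otimes (y \cdot \varphi(b,c) \cdot m)$, which still lies in $V \otimes v \otimes \Delta(\mu)$, and the isomorphism class does not depend on the choice of $v$ because the bilinear form $\varphi$ acts trivially on the second tensor slot.

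Combining these two steps across the inflation layers indexed by $k \leq n/2$ produces exactly the list
\[
\Delta(\mu, k) \;=\; V(n,k) \otimes v_k \otimes \Delta(\mu),
\]
with $\Delta(\mu)$ running over the cell modules of $H_{n-2k}$. Because the statement is explicitly flagged as a corollary of the general theory of cellularly stratified algebras from \cite{damn}, I do not anticipate any substantive obstacle: the only real task is to check that the stratification data we constructed satisfies the hypotheses of the abstract classification, which is precisely the content of the Theorem just proved.
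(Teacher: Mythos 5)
Your proposal is correct and follows essentially the same route as the paper, which simply declares the corollary immediate from the general theory of cellularly stratified algebras in \cite{damn}; you have merely unpacked that citation by recalling the standard fact from \cite{cellular} that each cell module $\Delta(\mu)$ of an input algebra $B_l$ inflates to a cell module $V_l \otimes v \otimes \Delta(\mu)$, with the multiplication rule showing independence of the choice of $v$. The only thing left implicit in your write-up (and in the paper's) is that elements of higher layers act compatibly on these modules, which is the content of the paper's second lemma on products of diagrams from different layers.
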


We have that the simple modules of $H_{n-2k}$ are $D(\mu)=\Delta(\mu)/\rad \Delta(\mu)$ and that the simple modules of $B(C_n,R, \delta)$ are $D(\mu,k)=\Delta(\mu,k)/\rad \Delta(\mu,k)$. 

\begin{cor}
Assume that $R$ is a field of characteristic $p\geq 0$. The Brauer algebra $B(C_n,R, \delta)$ is quasi-hereditary if and only if $\delta \neq 0$ and $p > n$.
\end{cor}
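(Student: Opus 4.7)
My plan is to invoke the quasi-heredity criterion for cellularly stratified algebras proved in \cite{damn}, which says in essence that such an algebra is quasi-hereditary if and only if each of its input algebras is quasi-hereditary. The previous theorem identifies the inputs in our setting as the group algebras $RH_{n-2k}$ of the hyperoctahedral groups for $k=0,1,\ldots,\lfloor n/2\rfloor$, subject to the standing assumption $\delta\neq 0$, so the corollary will reduce to a numerical calculation with $|H_m|$.

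For the forward direction, I would first note that being quasi-hereditary presupposes the cellularly stratified structure, and for this the idempotents $f_k$ must exist, which forces $\delta\neq 0$. Next, the heredity chain refines the cell-ideal chain, so in particular the top quotient $A/J_1$ must be semisimple; by Lemma \ref{result} this top layer is exactly $RH_n$. Since $|H_n|=2^n\cdot n!$, Maschke's theorem then forces $p\nmid 2^n\cdot n!$, which unwinds to $p=0$ or $p>n$.

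For the reverse direction, I would assume $\delta\neq 0$ together with $p=0$ or $p>n$. The cellularly stratified structure is already in place by the previous theorem. Since $|H_{n-2k}|$ divides $|H_n|=2^n\cdot n!$, the assumption on $p$ guarantees that every input $RH_{n-2k}$ is semisimple and hence trivially quasi-hereditary. The criterion of \cite{damn} then yields that $B(C_n,R,\delta)$ itself is quasi-hereditary.

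The main obstacle will be extracting the ``if and only if'' in \cite{damn} in exactly the form used above and verifying that no further non-degeneracy condition on the bilinear forms $\varphi$ is secretly hidden in its hypotheses (in particular, that $\delta\neq 0$ suffices to make each such form non-degenerate, which is visible on the idempotents $f_k$). Once that is secured, the combinatorial translation between semisimplicity of $RH_n$ and the arithmetic bound $p>n$ is immediate from $|H_n|=2^n\cdot n!$.
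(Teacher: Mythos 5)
Your proposal follows essentially the same route as the paper: both reduce the claim to the criterion that an iterated inflation is quasi-hereditary precisely when every input algebra $RH_{n-2k}$ is quasi-hereditary and no layer has identically vanishing bilinear form, then translate the first condition into $p\nmid 2^n\, n!$ (i.e.\ $p=0$ or $p>n$) and the second into $\delta\neq 0$. The only substantive step you leave implicit in the forward direction is that the top quotient $RH_n$, being a quasi-hereditary \emph{self-injective} algebra, must be semisimple before Maschke can be applied; this is standard and does not affect the correctness of the argument.
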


\begin{proof}
A chain of cell-ideals in the input algebras is always induced to a chain of cell-ideals in the iterated inflation.  The chain is quasi-hereditary precisely when all the input algebras are quasi-hereditary, and there is no layer whose bilinear form is zero.  

We have that the group algebras $RH_{n-2k}$ are quasi-hereditary if and only if $p > n$.  There is a layer of the iterated inflation such that $\varphi(-,-)=0$ if and only if $\delta = 0$.  Therefore we are done.
\end{proof}

\begin{cor}
The decomposition matrices of the hyperoctahedral groups appear along the diagonal of the decomposition matrix for the Brauer algebra of type $C$.  More specifically,
\begin{align*}
[\Delta(\mu,k):D(\lambda,k)]=[\Delta(\mu):D(\lambda)].  
\end{align*}
\end{cor}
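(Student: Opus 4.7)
The plan is to exploit the Schur-functor formalism available for any cellularly stratified algebra: localising at the idempotent $e_k = f_k = u_k \otimes v_k \otimes 1_{H_{n-2k}}$ should precisely recover the representation theory of $H_{n-2k}$ from the $k$-th stratum of $B(C_n,R,\delta)$.

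First I would introduce the exact functor
\[
F_k : B(C_n,R,\delta)\text{-mod} \longrightarrow e_k B(C_n,R,\delta) e_k\text{-mod}, \qquad F_k(M) = e_k M.
\]
Using the cellularly stratified structure (Theorem above) together with the idempotent conditions $e_l e_m = e_m = e_m e_l$ for $l > m$, one checks in the usual way that $e_k B(C_n,R,\delta) e_k$ is isomorphic to $R H_{n-2k}$: indeed, the inflation decomposition $J_k/J_{k+1} \cong V(n,k)\otimes V'(n,k)\otimes R H_{n-2k}$ together with the idempotent property of $f_k$ shows that sandwiching between $u_k\otimes v_k\otimes 1$ and itself picks out exactly the group algebra factor, while the higher strata $J_{l}/J_{l+1}$ with $l < k$ contribute nothing because their bilinear forms evaluated against $v_k$ land in ideals strictly higher in the stratification.

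Next I would verify the key mapping properties of $F_k$ on cell modules and simple modules. Since $\Delta(\mu,k) = V(n,k)\otimes v_k \otimes \Delta(\mu)$ by the preceding corollary, the computation $e_k \cdot (w\otimes v_k\otimes x) = \varphi(v_k,w)\, v_k \otimes v_k\otimes x$ together with $\varphi(v_k,v_k)=1$ (the idempotency of $f_k$) shows $F_k(\Delta(\mu,k)) \cong \Delta(\mu)$ as $RH_{n-2k}$-modules. Standard Schur-functor arguments for cellular algebras then give $F_k(D(\lambda,k)) \cong D(\lambda)$ whenever this image is non-zero (which follows because the head of $\Delta(\mu,k)$ is not annihilated by $e_k$); for strata with more horizontal arcs, the corresponding simples are annihilated by $e_k$, but these do not appear as composition factors of $\Delta(\mu,k)$ in the first place.

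Finally, since $F_k$ is exact and sends $\Delta(\mu,k) \mapsto \Delta(\mu)$, $D(\lambda,k) \mapsto D(\lambda)$, applying it to any composition series of $\Delta(\mu,k)$ yields
\[
[\Delta(\mu,k) : D(\lambda,k)] = [F_k\Delta(\mu,k) : F_k D(\lambda,k)] = [\Delta(\mu) : D(\lambda)],
\]
which is the claim. The main technical hurdle is the second paragraph, namely verifying cleanly that $e_k B e_k \cong R H_{n-2k}$ and that $F_k$ behaves as claimed on simples; everything else is formal once the cellularly stratified structure is in hand, and in fact this entire argument is carried out in the general setting of \cite{damn}, so my proof would amount to invoking their results and checking that the hypotheses hold in the type $C$ setting.
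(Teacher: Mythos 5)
Your overall strategy -- idempotent truncation by $f_k$ and the exactness of the Schur functor $F_k=e_k(-)$ -- is exactly the argument of \cite{damn} that the paper invokes; the paper itself offers no proof beyond that citation, so in spirit you and the author agree. But your key technical claim, that $e_k B(C_n,R,\delta)e_k\cong RH_{n-2k}$, is false, and the justification you give points at the wrong strata. Multiplication of diagrams never decreases the number of horizontal arcs, so for a diagram $d$ with $l<k$ arcs the product $f_k d f_k$ does not vanish; it simply acquires at least $k$ arcs and is absorbed into $J_k$. The strata that actually obstruct your isomorphism are those with \emph{more} than $k$ arcs: $f_k J_{k+1} f_k$ is a nonzero ideal of $f_k B f_k$ (already for the smallest example, sandwiching a two-arc diagram between copies of $f_1$ yields a nonzero scalar multiple of a two-arc diagram). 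What is true is that $f_k B f_k$ is a copy of the smaller algebra $B(C_{n-2k},R,\delta)$, of which $RH_{n-2k}$ is only the top quotient $f_kBf_k/f_kJ_{k+1}f_k$.

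The gap is repairable by a standard move, and this is what \cite{damn} actually does: either pass first to the quotient $A/J_{k+1}$, where the image of $f_k$ does cut out $RH_{n-2k}$ on the nose, or observe that $\Delta(\mu,k)$ and all of its subquotients are modules for $A/J_{k+1}$, hence are annihilated by $e_kJ_{k+1}e_k$, so that $F_k$ applied to them factors through the surjection $e_kAe_k\twoheadrightarrow RH_{n-2k}$. With that correction the remainder of your argument -- $F_k$ is exact, $F_k\Delta(\mu,k)\cong\Delta(\mu)$ via $\varphi(v_k,u_k)=1$, $F_kD(\lambda,k)\cong D(\lambda)\neq 0$ because $e_k$ does not annihilate the head, and simples from layers with fewer arcs are killed by $f_k\in J_k$ -- goes through and yields the stated equality of decomposition numbers. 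You should also be explicit that composition factors $D(\lambda,l)$ of $\Delta(\mu,k)$ with $l\neq k$ are sent to zero or to simples of other truncated algebras, so no multiplicities are conflated; this is where the idempotent ordering axiom $e_le_m=e_m=e_me_l$ is genuinely used.
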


\end{document}